\newtheorem{theorem}{Theorem}
\date{}
\title{
\vspace{-1.5cm}
\large{\bf }
\vspace{-0.5cm}
}
\title{Multi-Scale Conformal Prediction: A Theoretical Framework with Coverage Guarantees}
\author{
    Ali Baheri\thanks{Corresponding author: \texttt{akbeme@rit.edu}} \\
    Department of Mechanical Engineering, \\
    Rochester Institute of Technology, \\
    Rochester, NY, USA
    \and
    Marzieh Amiri Shahbazi\thanks{Email: \texttt{ma7684@rit.edu}} \\
    Department of Industrial and Systems Engineering, \\
    Rochester Institute of Technology, \\
    Rochester, NY, USA
}
\begin{document}

\maketitle

\begin{abstract}

We propose a multi-scale extension of conformal prediction, an approach that constructs prediction sets with finite-sample coverage guarantees under minimal statistical assumptions. Classic conformal prediction relies on a single notion of “conformity,” overlooking the multi-level structures that arise in applications such as image analysis, hierarchical data exploration, and multi-resolution time series modeling. In contrast, the proposed framework defines a distinct conformity function at each relevant scale or resolution, producing multiple conformal predictors whose prediction sets are then intersected to form the final multi-scale output. We establish theoretical results confirming that the multi-scale prediction set retains the marginal coverage guarantees of the original conformal framework and can, in fact, yield smaller or more precise sets in practice. By distributing the total miscoverage probability across scales in proportion to their informative power, the method further refines the set sizes. We also show that dependence between scales can lead to conservative coverage, ensuring that the actual coverage exceeds the nominal level. Numerical experiments in a synthetic classification setting demonstrate that multi-scale conformal prediction achieves or surpasses the nominal coverage level while generating smaller prediction sets compared to single-scale conformal methods.
\end{abstract}

\textbf{Keywords:} Conformal Prediction, Multi-scale Analysis, Uncertainty Quantification, Hierarchical Modeling

\section{Introduction}

Reliable uncertainty quantification serves as a foundational requirement for developing robust machine learning systems. Among various uncertainty estimation approaches, conformal prediction is recognized as an approach that offers finite-sample coverage guarantees for constructing prediction sets through distribution-free calibration \cite{shafer2008tutorial,angelopoulos2023conformal}. Under mild assumptions on the data—specifically, that the observations can be treated as exchangeable—conformal predictors ensure that the true label of a new test point is captured by the prediction set with high probability. Crucially, this guarantee is distribution-free and does not depend on strong parametric modeling assumptions. Despite these appealing properties, classical conformal prediction procedures typically rely on a single notion of \emph{conformity}, limiting their ability to incorporate multi-level or multi-resolution structures that might exist in complex datasets. Many applications, such as image recognition, time series forecasting, and hierarchical data analysis, involve information distributed across different scales or levels of detail \cite{zeng2022multi,stankeviciute2021conformal,sanyal2012bayesian}. Focusing on only one scale can overlook important local or global patterns.

This paper proposes a framework for multi-scale conformal prediction. In this framework, multiple conformity functions are defined—each reflecting a particular scale or resolution of the data—and each scale produces its own conformal prediction set. All scale-specific sets are then combined to produce one final multi-scale set. By ensuring that the total allowable miscoverage across all scales sums to alpha, this procedure retains the usual marginal coverage guarantees. At the same time, intersecting the information from multiple scales can substantially reduce the average size of the resulting prediction sets, thereby improving efficiency. We provide a theoretical analysis of multi-scale conformal prediction. First, we show that under basic exchangeability conditions, the final multi-scale set continues to meet the nominal coverage target. Next, we investigate whether intersecting scale-specific sets can indeed yield smaller sets without sacrificing validity. Our theoretical results confirm that multi-scale conformal prediction can be more efficient than any single-scale method in terms of expected set size. We also develop principles for splitting the total allowed miscoverage among the multiple scales: more informative scales merit smaller miscoverage, while less informative scales can be assigned a higher miscoverage allowance. We further demonstrate that, in the presence of certain dependencies, the multi-scale approach can be conservative, potentially achieving coverage above the nominal level. Finally, under suitable conditions on the conformity scores, we establish an asymptotic result showing that multi-scale sets converge to the minimal set achieving the desired coverage. To illustrate these ideas empirically, we include a simulation study, indicating that multi-scale conformal prediction meets or exceeds the theoretical coverage level across a range of settings and consistently produces sets that are smaller on average compared to single-scale conformal methods. 


\noindent {\textbf{Paper Organization.}} The paper is organized as follows: We first describe the multi-scale conformal prediction methodology in a formal manner. We then present key theorems establishing coverage validity, efficiency, and asymptotic optimality. We illustrate these properties with numerical examples. Finally, we discuss potential extensions for future work in using multi-resolution analysis under the umbrella of conformal prediction.

\noindent{\textbf{Related Work.}} Recent developments in conformal prediction have explored various modifications to enhance prediction efficiency while maintaining coverage guarantees \cite{bhatnagar2023improved,yan2024provably,jeary2024verifiably,xi2024delving}. Notably, research has investigated adaptive conformal inference methods that adjust to local data characteristics, leading to tighter prediction sets in regions of high confidence \cite{gibbs2021adaptive,chernozhukov2021distributional,lei2018distribution,romano2019conformalized}. These approaches have shown promise in reducing the conservative nature of conformal predictions while preserving their theoretical guarantees. The concept of multi-scale analysis has been well-established in various domains of machine learning and statistics \cite{alber2019integrating,peng2021multiscale,liu2022hierarchical,elizar2022review,baheri2025smtl}. Hierarchical modeling approaches have shown success in capturing complex data structures and improving prediction accuracy across different applications. These methods use information at multiple levels of granularity to enhance model performance and robustness.

The intersection of conformal prediction with hierarchical modeling has received limited attention in the literature \cite{principato2024conformal}. While some studies have explored nested conformal prediction sets, they typically focus on a single scale of analysis with varying confidence levels rather than truly integrating information from multiple scales \cite{tumu2024multi}. The theoretical foundations for combining predictions across different scales while maintaining valid coverage guarantees have remained largely unexplored \cite{karimi2023quantifying}.
Research in uncertainty quantification has highlighted the importance of capturing different sources of uncertainty on various scales. These works have demonstrated that considering multiple scales can lead to more accurate uncertainty estimates. However, integration of these insights with conformal prediction has been limited.

The optimization of miscoverage levels across different scales represents another important area of related work. Previous research has investigated optimal allocation strategies for confidence levels in multiple testing scenarios, but these approaches have not been directly applied to multi-scale conformal prediction settings. Recent work has also explored the relationship between prediction set size and coverage guarantees \cite{dhillon2024expected,kiyani2024length}, particularly in the context of classification problems. These studies have provided insights into the trade-offs between prediction efficiency and coverage reliability, though primarily in single-scale settings. The present work builds upon these foundations by introducing a framework for multi-scale conformal prediction. It extends existing theory to account for scale-specific characteristics while maintaining the fundamental properties of conformal prediction. 

Multi-scale approaches are closely related to multi-fidelity frameworks, which approach modeling and simulation from multiple levels of fidelity in various contexts, such as design \cite{fernandez2016review,charisi2025multi}, verification \cite{shahrooei2022falsification,baheri2023exploring,beard2022black,baheri2023safety}, and optimization \cite{forrester2007multi}. In multi-fidelity design, for instance, lower-fidelity models offer rapid yet approximate evaluations for early-stage exploration, while higher-fidelity simulations provide more precise but computationally demanding assessments for subsequent refinement. Multi-fidelity verification similarly employs a hierarchy of simulators or models, balancing cost and accuracy to validate system performance.

\section{Preliminaries}

Conformal prediction provides finite-sample coverage guarantees under the assumption that $\left\{\left(X_i, Y_i\right)\right\}_{i=1}^{n+1}$ are exchangeable random variables. Given observed data $\left\{\left(X_i, Y_i\right)\right\}_{i=1}^n$ and a new feature vector $X_{n+1}$, conformal methods construct a prediction set for $Y_{n+1}$ by means of a conformity score $A(\cdot, \cdot)$. Conformity scores measure how well a particular label conforms to the empirical distribution of previously observed pairs. Formally, let $A_i=A\left(X_i, Y_i\right)$ be the conformity score for the $i$-th training sample, and define

$$
A_{n+1}(y)=A\left(X_{n+1}, y\right)
$$
which represents the score for the new feature $X_{n+1}$ when paired with a candidate label $y$. For each candidate $y \in \mathcal{Y}$, a p-value is computed by comparing $A_{n+1}(y)$ against the scores from the training set:

$$
p(y)=\frac{1}{n+1}\left(\sum_{i=1}^n \mathbf{1}\left\{A_i \geq A_{n+1}(y)\right\}+1\right)
$$
The conformal prediction set at miscoverage level $\alpha$ is then defined as

$$
C_n\left(X_{n+1}\right)=\{y \in \mathcal{Y}: p(y)>\alpha\}
$$
Under exchangeability, this construction guarantees that

$$
\mathbb{P}\left(Y_{n+1} \in C_n\left(X_{n+1}\right)\right) \geq 1-\alpha \quad \text { for any } 0<\alpha<1
$$
where the probability is taken over the joint distribution of the $(n+1)$ data points. The procedure thus delivers a distribution-free coverage guarantee, making no strong assumptions on the form of the underlying data-generating process. 

\section{Methodology}

The proposed multi-scale conformal prediction framework aims to enhance predictive inference by integrating information from multiple scales or levels of data abstraction. This approach uses the strengths of conformal prediction while addressing its limitations in capturing multiscale structures inherent in complex datasets. Consider a sequence of observed data points $\left(X_1, Y_1\right),\left(X_2, Y_2\right), \ldots,\left(X_n, Y_n\right)$, where $X_i \in \mathcal{X}$ represents the feature vectors and $Y_i \in \mathcal{Y}$ denotes the corresponding responses or labels. We assume that these data points are exchangeable, meaning the joint distribution remains invariant under any permutation of indices. The objective is to construct a prediction set $C_n\left(X_{n+1}\right)$ for a new observation $X_{n+1}$ such that it contains the true label $Y_{n+1}$ with a predefined confidence level $1-\alpha$, while striving for minimal size to improve prediction efficiency. In classic conformal prediction, a single conformity score function is employed to assess how well a candidate label conforms to the given features based on the observed data. However, this approach may not fully exploit the information available at different scales of data representation. To address this limitation, the multi-scale conformal prediction framework introduces multiple conformity score functions, each corresponding to a different scale or level of abstraction. By combining predictions from these scales, the framework captures a richer set of patterns and dependencies present in the data. For each scale $k \in\{1,2, \ldots, K\}$, a conformity score function $A^{(k)}: \mathcal{X} \times \mathcal{Y} \rightarrow \mathbb{R}$ is defined. This function quantifies the conformity of a candidate label $y$ with respect to the features $X$ at scale $k$. The choice of conformity score functions is crucial; they should be appropriate for the specific characteristics of each scale, effectively capturing relevant patterns and structures.

To construct the conformal predictors at each scale, conformity scores for the training data are first computed. For each $i=1, \ldots, n$ and scale $k$, the conformity score is calculated as $A_i^{(k)}=$ $A^{(k)}\left(X_i, Y_i\right)$. For the new observation $X_{n+1}$, conformity scores $A_{n+1}^{(k)}(y)=A^{(k)}\left(X_{n+1}, y\right)$ are computed for all candidate labels $y \in \mathcal{Y}$. These scores measure how well each candidate label $y$ conforms to $X_{n+1}$ at scale $k$, relative to the observed data. Using the conformity scores, p -values for each candidate label at each scale are calculated. The p -value $p^{(k)}(y)$ at scale $k$ is defined as

$$
p^{(k)}(y)=\frac{1}{n+1}\left(\sum_{i=1}^n \mathbb{I}\left\{A_i^{(k)} \geq A_{n+1}^{(k)}(y)\right\}+1\right)
$$
where $\mathbb{I}\{\cdot\}$ denotes the indicator function. This $p$-value represents the proportion of conformity scores in the augmented dataset (including the candidate label) that are at least as extreme as $A_{n+1}^{(k)}(y)$. It reflects the plausibility of $y$ being the true label for $X_{n+1}$ based on the conformity measure at scale $k$. At each scale, a prediction set $C_n^{(k)}\left(X_{n+1}\right)$ is constructed by including all candidate labels whose p -values exceed a predetermined miscoverage level $\alpha_k$, where $\alpha_k \in(0,1)$ and $\sum_{k=1}^K \alpha_k=\alpha$:

$$
C_n^{(k)}\left(X_{n+1}\right)=\left\{y \in \mathcal{Y}: p^{(k)}(y)>\alpha_k\right\}
$$
The final multi-scale prediction set $C_n\left(X_{n+1}\right)$ is obtained by intersecting the prediction sets from all scales: 

$$
C_n\left(X_{n+1}\right)=\bigcap_{k=1}^K C_n^{(k)}\left(X_{n+1}\right)
$$
This intersection contains only those candidate labels deemed conforming across all scales, thus otentially reducing the size of the prediction set and enhancing prediction efficiency. The ationale is that a candidate label consistent with the data at multiple scales is more likely to be the true label. Under the exchangeability assumption, each scale-specific prediction set $C_n^{(k)}\left(X_{n+1}\right)$ satisfies the narginal coverage guarantee:

$$
\mathbb{P}\left(Y_{n+1} \in C_n^{(k)}\left(X_{n+1}\right)\right) \geq 1-\alpha_k
$$




\section{Theoretical Results}

\begin{theorem}
\textbf{(Coverage Validity of Multi-Scale Prediction Set).} Let $\left\{\left(X_i, Y_i\right)\right\}_{i=1}^n$ be a sequence of observed data points, and $\left(X_{n+1}, Y_{n+1}\right)$ be a new data point. Assume that the extended sequence $\left\{\left(X_i, Y_i\right)\right\}_{i=1}^{n+1}$ is exchangeable. Consider $K$ conformal predictors, each operating at a different scale $k \in\{1,2, \ldots, K\}$, with miscoverage levels $\alpha_k$ satisfying $\sum_{k=1}^K \alpha_k=\alpha$ for some $\alpha \in(0,1)$. Then, the multi-scale prediction set defined as

$$
C_n\left(X_{n+1}\right)=\bigcap_{k=1}^K C_n^{(k)}\left(X_{n+1}\right),
$$
where $C_n^{(k)}\left(X_{n+1}\right)$ is the prediction set at scale $k$ with miscoverage level $\alpha_k$, satisfies the marginal coverage guarantee

$$
\mathbb{P}\left(Y_{n+1} \in C_n\left(X_{n+1}\right)\right) \geq 1-\alpha
$$

\end{theorem}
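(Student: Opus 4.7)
The plan is to combine the marginal coverage guarantee at each individual scale with a union bound over the $K$ miscoverage events. First I would observe that for each fixed $k$, the scale-$k$ procedure is just an instance of the vanilla split/full conformal construction recalled in the Preliminaries section, applied with conformity function $A^{(k)}$. Since the augmented sequence $\{(X_i,Y_i)\}_{i=1}^{n+1}$ is exchangeable by assumption, and applying a deterministic function $A^{(k)}$ coordinate-wise preserves exchangeability of the resulting scores, the rank of $A^{(k)}_{n+1}(Y_{n+1})$ among $A_1^{(k)},\ldots,A_n^{(k)},A_{n+1}^{(k)}(Y_{n+1})$ is uniform on $\{1,\ldots,n+1\}$. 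This gives the single-scale bound $\mathbb{P}\bigl(Y_{n+1}\in C_n^{(k)}(X_{n+1})\bigr)\ge 1-\alpha_k$ for every $k$, as already stated in the paper.

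Next I would translate the intersection defining $C_n(X_{n+1})$ into a union of miscoverage events via De Morgan's law: $\{Y_{n+1}\notin C_n(X_{n+1})\}=\bigcup_{k=1}^{K}\{Y_{n+1}\notin C_n^{(k)}(X_{n+1})\}$. Applying Boole's inequality and the scale-wise bound yields
$$
\mathbb{P}\bigl(Y_{n+1}\notin C_n(X_{n+1})\bigr)\;\le\;\sum_{k=1}^{K}\mathbb{P}\bigl(Y_{n+1}\notin C_n^{(k)}(X_{n+1})\bigr)\;\le\;\sum_{k=1}^{K}\alpha_k\;=\;\alpha,
$$
and the desired conclusion follows by complementation.

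The argument is therefore essentially a one-line union bound on top of the classical conformal guarantee. The only place that requires a second look is the justification that the single-scale guarantee genuinely applies when the same data are recycled across the $K$ scales; this is not a real obstacle because the union bound does not demand any independence between the scale-wise miscoverage events, only a marginal bound on each. Consequently the result holds for arbitrary and possibly strongly dependent choices of $A^{(1)},\ldots,A^{(K)}$, and for any allocation $\{\alpha_k\}$ with $\alpha_k\in(0,1)$ and $\sum_k\alpha_k=\alpha$. I would note at the end that the inequality is typically strict, since miscoverage events at different scales tend to overlap---an observation that motivates the conservatism remarks and the efficiency analysis developed later in the paper.
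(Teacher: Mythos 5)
Your proposal is correct and follows essentially the same route as the paper's own proof: establish the marginal guarantee $\mathbb{P}(Y_{n+1}\notin C_n^{(k)}(X_{n+1}))\le\alpha_k$ at each scale, rewrite the miscoverage of the intersection as a union via De Morgan's law, and apply Boole's inequality with $\sum_k\alpha_k=\alpha$. Your added remarks --- that the union bound requires no independence across scales and that the inequality is typically strict --- match the paper's post-proof remarks, so there is nothing to flag.
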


\begin{proof}

We aim to prove that the combined prediction set $C_n\left(X_{n+1}\right)$ maintains the desired coverage level $1-\alpha$ under the exchangeability assumption. The proof involves the following steps:

\begin{itemize}

\item Establish the validity of each scale-specific prediction set $C_n^{(k)}\left(X_{n+1}\right)$.

\item Use the union bound to relate the miscoverage probabilities across scales.

\item Combine the individual miscoverage probabilities to obtain the overall coverage guarantee for $C_n\left(X_{n+1}\right)$.

\end{itemize}


\noindent {\textbf{Step 1: Validity of Scale-Specific Prediction Sets.}} For each scale $k \in\{1,2, \ldots, K\}$, the conformal prediction method ensures that the prediction set $C_n^{(k)}\left(X_{n+1}\right)$ satisfies:

$$
\mathbb{P}\left(Y_{n+1} \in C_n^{(k)}\left(X_{n+1}\right)\right) \geq 1-\alpha_k
$$
This result holds under the exchangeability assumption of the data sequence $\left\{\left(X_i, Y_i\right)\right\}_{i=1}^{n+1}$. The exchangeability ensures that the distribution of the p-values $p^{(k)}\left(Y_{n+1}\right)$ is stochastically larger than the uniform distribution on $[0,1]$. Specifically, for any $\alpha_k \in(0,1)$:

$$
\mathbb{P}\left(p^{(k)}\left(Y_{n+1}\right) \leq \alpha_k\right) \leq \alpha_k
$$
which implies
$
\mathbb{P}\left(Y_{n+1} \notin C_n^{(k)}\left(X_{n+1}\right)\right) \leq \alpha_k.
$

\noindent {\textbf{Step 2: Miscoverage Probability of the Combined Prediction Set.} The combined prediction set is the intersection of the scale-specific prediction sets:

$$
C_n\left(X_{n+1}\right)=\bigcap_{k=1}^K C_n^{(k)}\left(X_{n+1}\right)
$$
The event that $Y_{n+1}$ is not contained in $C_n\left(X_{n+1}\right)$ can be expressed as:

$$
\left\{Y_{n+1} \notin C_n\left(X_{n+1}\right)\right\}=\left\{Y_{n+1} \notin \bigcap_{k=1}^K C_n^{(k)}\left(X_{n+1}\right)\right\}=\bigcup_{k=1}^K\left\{Y_{n+1} \notin C_n^{(k)}\left(X_{n+1}\right)\right\}
$$
Let us define the events:

$$
E_k=\left\{Y_{n+1} \notin C_n^{(k)}\left(X_{n+1}\right)\right\}, \quad \text { for } k=1,2, \ldots, K
$$
Then, the event of miscoverage by the combined prediction set is:
$
E=\left\{Y_{n+1} \notin C_n\left(X_{n+1}\right)\right\}=\bigcup_{k=1}^K E_k.
$

\noindent {\textbf{Step 3: Application of the Union Bound.}} Under the union bound (also known as Boole's inequality), the probability of the union of events satisfies
$
\mathbb{P}\left(\bigcup_{k=1}^K E_k\right) \leq \sum_{k=1}^K \mathbb{P}\left(E_k\right)
$
Applying this to our context:

$$
\mathbb{P}\left(Y_{n+1} \notin C_n\left(X_{n+1}\right)\right) \leq \sum_{k=1}^K \mathbb{P}\left(Y_{n+1} \notin C_n^{(k)}\left(X_{n+1}\right)\right)
$$
From Step 1, we have:
$
\mathbb{P}\left(Y_{n+1} \notin C_n^{(k)}\left(X_{n+1}\right)\right) \leq \alpha_k.
$
Therefore,
$$
\mathbb{P}\left(Y_{n+1} \notin C_n\left(X_{n+1}\right)\right) \leq \sum_{k=1}^K \alpha_k=\alpha
$$
In conclusion, we have established that:
$
\mathbb{P}\left(Y_{n+1} \notin C_n\left(X_{n+1}\right)\right) \leq \alpha,
$
which implies:

$$
\mathbb{P}\left(Y_{n+1} \in C_n\left(X_{n+1}\right)\right) \geq 1-\alpha
$$
This completes the proof of Theorem 1.}

\end{proof}

\noindent {\textbf{Remarks:}}

\noindent 1) The exchangeability of the data sequence $\left\{\left(X_i, Y_i\right)\right\}_{i=1}^{n+1}$ is crucial for the validity of conformal prediction methods. It ensures that the order of the observations does not affect their joint distribution. Under exchangeability, the conformity scores and resulting p-values are valid for inference.

\noindent 2) The use of the union bound provides a conservative estimate of the miscoverage probability. If the events $E_k$ are not mutually exclusive, the inequality may be strict, meaning that the actual miscoverage probability could be less than $\alpha$.

\noindent 3) If there is dependence between the conformity scores at different scales, the miscoverage events $E_k$ may be correlated. Positive dependence (where the events are more likely to occur together) can make the union bound less tight, potentially leading to conservative coverage (i.e., the actual coverage exceeds $1-\alpha$ ).

\noindent 4) The miscoverage levels $\alpha_k$ must satisfy $\sum_{k=1}^K \alpha_k=\alpha$. The allocation can be uniform (e.g., $\alpha_k=\frac{\alpha}{K}$) or weighted based on the informativeness of each scale. The choice of $\alpha_k$ can affect the size and efficiency of the combined prediction set.

\begin{theorem}
\textbf{(Efficiency Improvement via Intersection).} Let $\left\{\left(X_i, Y_i\right)\right\}_{i=1}^n$ be a sequence of observed data points, and $X_{n+1}$ be a new feature vector. For each scale $k \in\{1,2, \ldots, K\}$, let $C_n^{(k)}\left(X_{n+1}\right)$ be the conformal prediction set constructed at scale $k$ with miscoverage level $\alpha_k$, where $\sum_{k=1}^K \alpha_k=\alpha$. Define the multi-scale prediction set as the intersection of the scale-specific prediction sets:

$$
C_n\left(X_{n+1}\right)=\bigcap_{k=1}^K C_n^{(k)}\left(X_{n+1}\right)
$$
Then:

\begin{enumerate}

\item The multi-scale prediction set is a subset of each scale-specific prediction set:

$$
C_n\left(X_{n+1}\right) \subseteq C_n^{(k)}\left(X_{n+1}\right), \quad \forall k \in\{1,2, \ldots, K\}
$$

\item Consequently, the size of the multi-scale prediction set satisfies:

$$
\left|C_n\left(X_{n+1}\right)\right| \leq\left|C_n^{(k)}\left(X_{n+1}\right)\right|, \quad \forall k
$$

\end{enumerate}
Therefore, the multi-scale prediction set $C_n\left(X_{n+1}\right)$ is potentially more efficient (i.e., smaller) than any individual scale-specific prediction set $C_n^{(k)}\left(X_{n+1}\right)$, while maintaining valid coverage guarantees.

\end{theorem}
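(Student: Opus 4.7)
The plan is to exploit the fact that the multi-scale set is defined as a finite intersection, which makes both claims pure set-theoretic consequences once coverage has been handled by Theorem 1. I expect no substantive obstacle in the argument itself; the proof is essentially a two-line chase plus an appeal to monotonicity, so most of my effort will go into stating the claims cleanly rather than computing.

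First, I would prove claim 1 by a direct element-chase: take an arbitrary $y \in C_n(X_{n+1}) = \bigcap_{k=1}^K C_n^{(k)}(X_{n+1})$ and invoke the definition of intersection to conclude $y \in C_n^{(k)}(X_{n+1})$ for every $k \in \{1, 2, \ldots, K\}$. Since $y$ was arbitrary, the inclusion $C_n(X_{n+1}) \subseteq C_n^{(k)}(X_{n+1})$ holds for each $k$. Equivalently, one could phrase this as: membership in the intersection requires $p^{(k)}(y) > \alpha_k$ simultaneously for every $k$, a condition strictly stronger than requiring it for any single $k$.

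Second, claim 2 reduces to monotonicity of the size functional $|\cdot|$ under set inclusion. Interpreting $|\cdot|$ as cardinality when $\mathcal{Y}$ is discrete, or as Lebesgue measure (or any other volume) when $\mathcal{Y}$ is continuous, the standard implication $A \subseteq B \Rightarrow |A| \leq |B|$ applied to $A = C_n(X_{n+1})$ and $B = C_n^{(k)}(X_{n+1})$ delivers $|C_n(X_{n+1})| \leq |C_n^{(k)}(X_{n+1})|$ immediately from claim 1.

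The only potential subtlety—and the closest thing to a real obstacle—is that smallness alone is vacuous without validity: one could make an arbitrarily small set by shrinking miscoverage budgets, but that would destroy coverage. I would therefore close the proof by citing Theorem 1 to observe that the marginal guarantee $\mathbb{P}(Y_{n+1} \in C_n(X_{n+1})) \geq 1 - \alpha$ still holds under the constraint $\sum_{k=1}^K \alpha_k = \alpha$. This juxtaposition is what turns a trivial monotonicity statement into a meaningful efficiency claim: the strict shrinkage witnessed by the inclusion is compatible with, rather than purchased at the cost of, the nominal coverage level.
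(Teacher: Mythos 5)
Your proposal is correct and follows essentially the same route as the paper's proof: an element-chase from the definition of intersection for claim 1, monotonicity of cardinality (or Lebesgue measure) under inclusion for claim 2, and an appeal to Theorem 1 to confirm that the shrinkage does not sacrifice coverage. No meaningful differences to report.
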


\begin{proof}

Our objective is to prove that the multi-scale prediction set $C_n\left(X_{n+1}\right)$ is: (i) a subset of each individual scale-specific prediction set $C_n^{(k)}\left(X_{n+1}\right)$, and (ii) potentially more efficient by having a size less than or equal to that of any $C_n^{(k)}\left(X_{n+1}\right)$.
%
For each scale $k \in\{1,2, \ldots, K\}$, the scale-specific prediction set $C_n^{(k)}\left(X_{n+1}\right)$ is defined as:

$$
C_n^{(k)}\left(X_{n+1}\right)=\left\{y \in \mathcal{Y}: p^{(k)}(y)>\alpha_k\right\},
$$
where $\mathcal{Y}$ is the set of all possible labels, and $p^{(k)}(y)$ is the conformal $p$-value at scale $k$ for candidate label $y$, calculated as:

$$
p^{(k)}(y)=\frac{1}{n+1}\left(\sum_{i=1}^n \mathbb{I}\left\{A_i^{(k)} \geq A_{n+1}^{(k)}(y)\right\}+1\right)
$$
Here, $A_i^{(k)}=A^{(k)}\left(X_i, Y_i\right)$ is the conformity score at scale $k$ for the $i$-th training example, $A_{n+1}^{(k)}(y)=A^{(k)}\left(X_{n+1}, y\right)$ is the conformity score at scale $k$ for the new input $X_{n+1}$ and candidate label $y$, and $\alpha_k \in(0,1)$ is the miscoverage level assigned to scale $k$, satisfying $\sum_{k=1}^K \alpha_k=\alpha$.

\noindent {\textit{Multi-Scale Prediction Set:} The multi-scale prediction set $C_n\left(X_{n+1}\right)$ is the intersection of the scale-specific prediction sets:

$$
C_n\left(X_{n+1}\right)=\bigcap_{k=1}^K C_n^{(k)}\left(X_{n+1}\right)=\left\{y \in \mathcal{Y}: y \in C_n^{(k)}\left(X_{n+1}\right) \text { for all } k\right\} .
$$

\noindent {\textbf{Step 2: Showing $C_n\left(X_{n+1}\right) \subseteq C_n^{(k)}\left(X_{n+1}\right)$ for All $k$}.}
%
By the definition of intersection, an element $y$ belongs to $C_n\left(X_{n+1}\right)$ if and only if it belongs to every $C_n^{(k)}\left(X_{n+1}\right)$. Formally:
$y \in C_n\left(X_{n+1}\right) \Longleftrightarrow y \in C_n^{(1)}\left(X_{n+1}\right)$ and $y \in C_n^{(2)}\left(X_{n+1}\right)$ and $\ldots$ and $y \in C_n^{(K)}\left(X_{n+1}\right)$.
This implies that for any $y \in C_n\left(X_{n+1}\right)$ and any $k$ :

$$
y \in C_n\left(X_{n+1}\right) \Longrightarrow y \in C_n^{(k)}\left(X_{n+1}\right)
$$
Therefore:
$$
C_n\left(X_{n+1}\right) \subseteq C_n^{(k)}\left(X_{n+1}\right), \quad \forall k \in\{1,2, \ldots, K\}
$$
\noindent {\textbf{Step 3: Comparing the Sizes of the Prediction Sets.}} In discrete settings, the size of a prediction set $C$ is its cardinality $|C|$, the number of elements in $C$. In continuous settings, the size may refer to the Lebesgue measure (volume) of $C$. Since $C_n\left(X_{n+1}\right) \subseteq C_n^{(k)}\left(X_{n+1}\right)$ for all $k$, it follows that:

$$
\left|C_n\left(X_{n+1}\right)\right| \leq\left|C_n^{(k)}\left(X_{n+1}\right)\right|, \quad \forall k
$$
This inequality holds because a subset cannot have a larger size than its superset.

\noindent {\textbf{Step 4: Interpretation of Efficiency.}} A prediction set is considered more efficient if it is smaller in size while maintaining the desired coverage level. Smaller prediction sets provide more precise predictions and are often more useful in practice. In conclusion, since $C_n\left(X_{n+1}\right)$ is a subset of each $C_n^{(k)}\left(X_{n+1}\right)$, it is potentially smaller than any individual scale-specific prediction set. By potentially reducing the size of the prediction set, the multi-scale approach can improve efficiency. Importantly, this efficiency gain does not compromise the coverage guarantee, as established in Theorem 1.}

\end{proof}

\begin{theorem} \textbf{(Optimal Allocation of Miscoverage Levels).} Let $\left\{\left(X_i, Y_i\right)\right\}_{i=1}^n$ be a sequence of observed data points, and $X_{n+1}$ be a new feature vector. For each scale $k \in\{1,2, \ldots, K\}$, let $C_n^{(k)}\left(X_{n+1}\right)$ be the conformal prediction set constructed at scale $k$ with miscoverage level $\alpha_k$, where $\sum_{k=1}^K \alpha_k=\alpha \in(0,1)$. Assume that:
\begin{enumerate}

\item  The expected size of each scale-specific prediction set $E\left[\left|C_n^{(k)}\left(X_{n+1}\right)\right|\right]$ is a decreasing convex function of $\alpha_k$.
\item The conformity scores at different scales are independent.
\end{enumerate}
Then, to minimize the expected size of the multi-scale prediction set $C_n\left(X_{n+1}\right)=$ $\bigcap_{k=1}^K C_n^{(k)}\left(X_{n+1}\right)$, the miscoverage levels $\alpha_k$ should be allocated such that more informative scales (those for which $E\left[\left|C_n^{(k)}\left(X_{n+1}\right)\right|\right]$ decreases more rapidly with $\alpha_k$ ) are assigned smaller $\alpha_k$, while less informative scales are assigned larger $\alpha_k$.

\end{theorem}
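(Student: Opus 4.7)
The plan is to frame the allocation problem as a constrained optimization --- minimize the expected size of the intersection $E[|C_n(X_{n+1})|]$ over $(\alpha_1,\dots,\alpha_K)$ subject to $\sum_k \alpha_k = \alpha$ and $\alpha_k \in (0,1)$ --- and then read off the qualitative conclusion from the Karush--Kuhn--Tucker (KKT) conditions. First I would use the independence hypothesis on the scale-specific conformity scores to factorize, for each candidate label $y$, the per-label inclusion probability as $\mathbb{P}(y \in C_n(X_{n+1})) = \prod_{k=1}^K q_k(y,\alpha_k)$, where $q_k(y,\alpha_k) := \mathbb{P}(y \in C_n^{(k)}(X_{n+1}))$. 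Summing over $y$ yields
\[
E[|C_n(X_{n+1})|] \;=\; \sum_{y \in \mathcal{Y}} \prod_{k=1}^K q_k(y,\alpha_k),
\]
and by construction $\sum_y q_k(y,\alpha_k) = E[|C_n^{(k)}(X_{n+1})|] = S_k(\alpha_k)$, so each per-scale aggregate is exactly the decreasing convex function given in hypothesis (1).

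Next I would form the Lagrangian $\mathcal{L}(\alpha,\lambda) = E[|C_n|] + \lambda\bigl(\sum_k \alpha_k - \alpha\bigr)$ and derive the interior stationarity condition $\partial_{\alpha_k} E[|C_n|] = -\lambda$ for every $k$, i.e.\
\[
\sum_{y \in \mathcal{Y}} \frac{\partial q_k(y,\alpha_k)}{\partial \alpha_k}\,\prod_{j \neq k} q_j(y,\alpha_j) \;=\; -\lambda.
\]
At an optimum, the marginal reduction in expected multi-scale set size per unit of miscoverage budget is equated across scales. The convexity of each $S_k$ combined with the product structure gives component-wise convexity of the objective in each $\alpha_k$, so the stationary point is a genuine minimizer in each coordinate direction, justifying KKT as sufficient.

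Finally I would translate the equalized-marginals condition into the claimed ordering. Consider two scales $k_1$ and $k_2$ with $|S_{k_1}'(\alpha)| > |S_{k_2}'(\alpha)|$ at common arguments --- the precise sense in which $k_1$ is \emph{more informative}. Convexity of each $S_k$ forces $|S_k'|$ to be decreasing in $\alpha_k$ (diminishing returns). To equalize $|\partial_{\alpha_k} E[|C_n|]|$ across $k_1$ and $k_2$ at the optimum, one must push $\alpha_{k_1}^*$ down (so that the steeper curve is evaluated where its slope magnitude remains large) while pushing $\alpha_{k_2}^*$ up; a monotonicity-in-$\lambda$ argument on the scalar equation $|S_k'(\alpha_k)| = \lambda$ then yields $\alpha_{k_1}^* \leq \alpha_{k_2}^*$, which is exactly the theorem's qualitative claim.

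The main obstacle will be rigorously decoupling the multivariate KKT equation --- which entangles all scales through the cross-product $\prod_{j \neq k} q_j(y,\alpha_j)$ --- so that the ordering argument cleanly reduces to comparing the one-dimensional curves $S_k'(\cdot)$. A clean route is to work in a regime where the per-label inclusion probabilities depend on $\alpha_k$ only through $S_k$ (for instance $q_k(y,\alpha_k) = S_k(\alpha_k)/|\mathcal{Y}|$ under a symmetry assumption on $\mathcal{Y}$), so the per-$y$ weights $\prod_{j\neq k} q_j$ factor out of the sum; otherwise the statement must be framed as comparative statics on $y$-averaged marginal returns. Making that reduction precise, while retaining the independence hypothesis as the lever that decouples the probabilities across scales, is the delicate step of the argument.
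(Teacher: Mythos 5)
Your overall route is the same as the paper's: factorize $E[|C_n(X_{n+1})|]=\sum_{y}\prod_k \mathbb{P}(y\in C_n^{(k)}(X_{n+1}))$ via independence, decouple the per-label probabilities so the objective becomes a product of one-dimensional functions of the $\alpha_k$, and then apply a Lagrange-multiplier first-order condition under the budget constraint $\sum_k\alpha_k=\alpha$. The step you flag as delicate --- replacing $q_k(y,\alpha_k)$ by $S_k(\alpha_k)/|\mathcal{Y}|$ --- is precisely what the paper does (it writes $P_k(y)\approx f_k(\alpha_k)/|\mathcal{Y}|$ as an explicit approximation rather than a derived fact), so you correctly identified the crux; the paper then minimizes $\prod_k f_k(\alpha_k)$ by taking logarithms, arriving at the equal-elasticity condition $f_k'(\alpha_k)/f_k(\alpha_k)=-\lambda$ for all $k$.

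There is, however, a concrete problem with your final ordering argument. After the decoupling, the stationarity condition is $S_k'(\alpha_k)\prod_{j\neq k}S_j(\alpha_j)=-\lambda$, which upon dividing by the common factor $\prod_j S_j(\alpha_j)$ is the elasticity condition $S_k'(\alpha_k)/S_k(\alpha_k)=-\tilde\lambda$; but in your last paragraph you silently replace this by the unnormalized equation $|S_k'(\alpha_k)|=\lambda$. Taken literally, that equation gives the \emph{reverse} of the claimed allocation: you correctly note that convexity makes $|S_k'|$ decreasing in $\alpha_k$, so if scale $k_1$ is uniformly steeper than $k_2$ (i.e.\ $|S_{k_1}'(\alpha)|>|S_{k_2}'(\alpha)|$ for all $\alpha$), then solving $|S_{k_1}'(\alpha_{k_1})|=|S_{k_2}'(\alpha_{k_2})|=\lambda$ forces $\alpha_{k_1}^*\geq\alpha_{k_2}^*$, since the steeper curve must be evaluated further along to bring its slope magnitude down to the common value. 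Your parenthetical ``so that the steeper curve is evaluated where its slope magnitude remains large'' describes moving \emph{away} from equalization, not toward it. To get the theorem's conclusion you need to work with the elasticity $\psi_k=S_k'/S_k$ as the paper does, and even then the direction of the allocation hinges on the monotonicity of $\psi_k$ in $\alpha_k$, which is a ratio of two decreasing positive quantities and is not pinned down by hypotheses (1) and (2) alone. So either retain the normalization and add an explicit monotonicity assumption on the elasticities, or restate ``more informative'' in terms of elasticities rather than raw derivatives; as written, the comparative-statics step does not close.
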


\begin{proof}

We aim to find the allocation of miscoverage levels $\left\{\alpha_k\right\}_{k=1}^K$ that minimizes the expected size of the combined prediction set $E\left[\left|C_n\left(X_{n+1}\right)\right|\right]$ under the given constraints.

\noindent {Assumptions:}

\noindent 1. Convexity and Monotonicity: For each scale $k$, the expected size $E\left[\left|C_n^{(k)}\left(X_{n+1}\right)\right|\right]$ is a convex and decreasing function of $\alpha_k$.

\noindent 2. Independence of Conformity Scores: The conformity scores (and thus the prediction sets) at different scales are independent.

\item 3. Total Miscoverage Level Constraint: The miscoverage levels satisfy $\sum_{k=1}^K \alpha_k=\alpha$.

Let $f_k\left(\alpha_k\right)=E\left[\left|C_n^{(k)}\left(X_{n+1}\right)\right|\right]$. The expected size of the multi-scale prediction set is:

$$
E\left[\left|C_n\left(X_{n+1}\right)\right|\right]=E\left[\left|\bigcap_{k=1}^K C_n^{(k)}\left(X_{n+1}\right)\right|\right]
$$

\noindent {\textbf{Step 1: Expressing the Expected Size of the Combined Prediction Set.}} Under the independence assumption, the expected size of the intersection of the prediction sets can be expressed as:

$$
E\left[\left|C_n\left(X_{n+1}\right)\right|\right]=\sum_{y \in \mathcal{Y}} \prod_{k=1}^K \mathbb{P}\left(y \in C_n^{(k)}\left(X_{n+1}\right)\right)
$$
where $\mathcal{Y}$ is the set of all possible labels.
Since the events $\left\{y \in C_n^{(k)}\left(X_{n+1}\right)\right\}$ are independent across scales, we can write:
$$
\mathbb{P}\left(y \in C_n\left(X_{n+1}\right)\right)=\prod_{k=1}^K \mathbb{P}\left(y \in C_n^{(k)}\left(X_{n+1}\right)\right)
$$
Therefore,
$
E\left[\left|C_n\left(X_{n+1}\right)\right|\right]=\sum_{y \in \mathcal{Y}} \prod_{k=1}^K P_k(y),
$
where $P_k(y)=\mathbb{P}\left(y \in C_n^{(k)}\left(X_{n+1}\right)\right)$.

\noindent {\textbf{Step 2: Relating $P_k(y)$ to Miscoverage Levels $\alpha_k$.}}
For a fixed $y \in \mathcal{Y}$, since the marginal coverage of each $C_n^{(k)}\left(X_{n+1}\right)$ is at least $1-\alpha_k$, we have:

$$
\mathbb{P}\left(Y_{n+1}=y \mid X_{n+1}\right) \leq \mathbb{P}\left(y \in C_n^{(k)}\left(X_{n+1}\right)\right) \leq 1
$$
However, without additional assumptions about the distribution of labels, we can approximate $P_k(y)$ using the expected fraction of labels included in $C_n^{(k)}\left(X_{n+1}\right)$:

$$
P_k(y) \approx \frac{E\left[\left|C_n^{(k)}\left(X_{n+1}\right)\right|\right]}{|\mathcal{Y}|}=\frac{f_k\left(\alpha_k\right)}{|\mathcal{Y}|}
$$
Under this approximation:

$$
E\left[\left|C_n\left(X_{n+1}\right)\right|\right] \approx|\mathcal{Y}|\left(\prod_{k=1}^K \frac{f_k\left(\alpha_k\right)}{|\mathcal{Y}|}\right)=\frac{\prod_{k=1}^K f_k\left(\alpha_k\right)}{|\mathcal{Y}|^{K-1}}
$$
Since $|\mathcal{Y}|^{K-1}$ is a constant, minimizing $E\left[\left|C_n\left(X_{n+1}\right)\right|\right]$ is equivalent to minimizing $\prod_{k=1}^K f_k\left(\alpha_k\right)$.

\noindent {\textbf{Step 3: Formulating the Optimization Problem.} Our goal is to solve:

$$
\min _{\left\{\alpha_k\right\}} \Phi\left(\alpha_1, \ldots, \alpha_K\right)=\prod_{k=1}^K f_k\left(\alpha_k\right)
$$
subject to:
$
\sum_{k=1}^K \alpha_k=\alpha, \quad 0<\alpha_k<1 \text { for all } k.
$
Taking the natural logarithm of the objective function we have,
$
\ln \Phi\left(\alpha_1, \ldots, \alpha_K\right)=\sum_{k=1}^K \ln f_k\left(\alpha_k\right).
$
Now, the optimization problem becomes,
$
\min _{\left\{\alpha_k\right\}} \sum_{k=1}^K \ln f_k\left(\alpha_k\right),
$
subject to the same constraints. We introduce a Lagrange multiplier $\lambda$ to incorporate the constraint:

$$
\mathcal{L}\left(\alpha_1, \ldots, \alpha_K, \lambda\right)=\sum_{k=1}^K \ln f_k\left(\alpha_k\right)+\lambda\left(\sum_{k=1}^K \alpha_k-\alpha\right)
$$
For each $\alpha_k$, the first-order condition is:

$$
\frac{\partial \mathcal{L}}{\partial \alpha_k}=\frac{f_k^{\prime}\left(\alpha_k\right)}{f_k\left(\alpha_k\right)}+\lambda=0
$$
where $f_k^{\prime}\left(\alpha_k\right)$ denotes the derivative of $f_k$ with respect to $\alpha_k$.
Rewriting the first-order condition:

$$
\frac{f_k^{\prime}\left(\alpha_k\right)}{f_k\left(\alpha_k\right)}=-\lambda
$$
Let $\psi_k\left(\alpha_k\right)=\frac{f_k^{\prime}\left(\alpha_k\right)}{f_k\left(\alpha_k\right)}$, which represents the negative elasticity of $f_k$ with respect to $\alpha_k$.
Therefore:

$$
\psi_k\left(\alpha_k\right)=-\lambda
$$
Since $\lambda$ is constant across all $k$, this implies,
$
\psi_k\left(\alpha_k\right)=\psi_j\left(\alpha_j\right), \quad \forall k, j .
$
This condition means that the negative elasticity $\psi_k\left(\alpha_k\right)$ must be the same for all scales.

\noindent {Definition (Negative Elasticity):}

$$
\psi_k\left(\alpha_k\right)=\frac{f_k^{\prime}\left(\alpha_k\right)}{f_k\left(\alpha_k\right)}=\frac{d}{d \alpha_k} \ln f_k\left(\alpha_k\right) .
$$
Since \( f_k\left(\alpha_k\right) \) is decreasing and convex in \( \alpha_k \), \( f_k^{\prime}\left(\alpha_k\right) < 0 \), and \( \psi_k\left(\alpha_k\right) < 0 \). To satisfy \( \psi_k\left(\alpha_k\right) = -\lambda \), we need to choose \( \alpha_k \) such that \( \psi_k\left(\alpha_k\right) \) is equal across all \( k \). Because the functions \( f_k \) may differ across scales (reflecting different levels of informativeness), the values of \( \alpha_k \) that satisfy this condition will generally differ. Thus, more informative scales (where \( f_k\left(\alpha_k\right) \) decreases rapidly with \( \alpha_k \)) will have larger \( \left|\psi_k\left(\alpha_k\right)\right| \) for the same \( \alpha_k \). To equalize \( \psi_k\left(\alpha_k\right) \) across scales, we need to assign smaller \( \alpha_k \) to more informative scales. Conversely, less informative scales receive larger \( \alpha_k \). By equating the negative elasticities \( \psi_k\left(\alpha_k\right) \) across scales, we allocate miscoverage levels \( \alpha_k \) such that more informative scales receive smaller \( \alpha_k \) (stricter prediction sets) and less informative scales receive larger \( \alpha_k \) (looser prediction sets). This allocation minimizes the expected size of the combined prediction set \( E\left[\left|C_n\left(X_{n+1}\right)\right|\right] \).}

\end{proof}

\begin{theorem} \textbf{(Conservative Coverage Under Positive Dependence).} Let $\left\{\left(X_i, Y_i\right)\right\}_{i=1}^n$ be a sequence of observed data points, and $\left(X_{n+1}, Y_{n+1}\right)$ be a new data point. Assume that the extended sequence $\left\{\left(X_i, Y_i\right)\right\}_{i=1}^{n+1}$ is exchangeable. For each scale $k \in$ $\{1,2, \ldots, K\}$, let $C_n^{(k)}\left(X_{n+1}\right)$ be the conformal prediction set constructed at scale $k$ with miscoverage level $\alpha_k$, where $\sum_{k=1}^K \alpha_k=\alpha \in(0,1)$. Suppose that the conformity scores at different scales are positively dependent. Then, the miscoverage probability of the multi-scale prediction set

$$
C_n\left(X_{n+1}\right)=\bigcap_{k=1}^K C_n^{(k)}\left(X_{n+1}\right)
$$
satisfies

$$
\mathbb{P}\left(Y_{n+1} \notin C_n\left(X_{n+1}\right)\right) \leq \alpha_{\text {dep }}<\alpha
$$
where $\alpha_{\text {dep }}$ is strictly less than $\alpha$. This means that the actual coverage of $C_n\left(X_{n+1}\right)$ is at least $1-\alpha_{\text {dep, }}$ which exceeds the nominal coverage level $1-\alpha$, resulting in conservative coverage.

\end{theorem}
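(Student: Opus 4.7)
The plan is to flip to the complementary coverage event and exploit positive dependence in the favorable direction. Writing $E_k := \{Y_{n+1} \notin C_n^{(k)}(X_{n+1})\}$, the multi-scale coverage event decomposes as $\{Y_{n+1} \in C_n(X_{n+1})\} = \bigcap_{k=1}^K E_k^c$, and Theorem 1 applied scale-by-scale yields $\mathbb{P}(E_k^c) \geq 1-\alpha_k$ for every $k$. The union-bound step used in Theorem 1 discards all interaction between the $E_k$, so the task is to show that positive dependence makes $\mathbb{P}\bigl(\bigcap_k E_k^c\bigr)$ strictly larger than what the union bound alone guarantees, and then track the resulting slack through to the statement.

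Next I would translate the informal hypothesis ``conformity scores across scales are positively dependent'' into a rigorous association statement on the indicators $\mathbf{1}_{E_k^c}$. Each event $E_k^c = \{p^{(k)}(Y_{n+1}) > \alpha_k\}$ is a coordinatewise monotone function of the augmented score vector at scale $k$; since positive association of the underlying score vector is preserved under coordinatewise monotone maps (the FKG-type closure property), the family $\{\mathbf{1}_{E_k^c}\}_{k=1}^K$ inherits positive association. This delivers the key structural inequality
\[
\mathbb{P}\Bigl(\bigcap_{k=1}^K E_k^c\Bigr) \;\geq\; \prod_{k=1}^K \mathbb{P}(E_k^c).
\]
Committing to an explicit formalization here is what I expect to be the main obstacle: the theorem only says ``positively dependent,'' and several candidate notions (positive quadrant dependence, positive association, Pearson-positive correlation) are not equivalent. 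I would adopt positive association as the cleanest working assumption and add a remark noting that weaker pairwise notions also suffice when one instead runs second-order Bonferroni inclusion--exclusion on $\mathbb{P}(\bigcup_k E_k)$.

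The remainder is a short chain of inequalities. Combining the association bound with the scale-wise coverage guarantees gives $\mathbb{P}(Y_{n+1} \in C_n(X_{n+1})) \geq \prod_{k=1}^K (1-\alpha_k)$. The elementary Weierstrass product inequality $\prod_{k=1}^K (1-\alpha_k) \geq 1 - \sum_{k=1}^K \alpha_k = 1-\alpha$ then holds with strict inequality whenever at least two $\alpha_k$ are positive, which is the nontrivial multi-scale regime. Setting $\alpha_{\mathrm{dep}} := 1 - \prod_{k=1}^K (1-\alpha_k)$ therefore yields $\mathbb{P}(Y_{n+1} \notin C_n(X_{n+1})) \leq \alpha_{\mathrm{dep}} < \alpha$, which is exactly the advertised conservative coverage. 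A stronger value of $\alpha_{\mathrm{dep}}$ is available when the dependence is nondegenerate, since the association step is then itself strict, but the stated bound already suffices to close the theorem.
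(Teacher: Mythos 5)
Your proposal is correct and follows essentially the same route as the paper: the same central inequality $\mathbb{P}\bigl(\bigcap_k E_k^c\bigr) \geq \prod_k \mathbb{P}(E_k^c)$ (the paper states its De Morgan dual for $\mathbb{P}(\bigcup_k E_k)$), the same Weierstrass product step $\prod_k(1-\alpha_k) > 1-\sum_k\alpha_k$ for $K\geq 2$, and the same choice $\alpha_{\mathrm{dep}} = 1-\prod_k(1-\alpha_k)$. The only substantive difference is that you justify the central inequality by formalizing ``positive dependence'' as positive association and invoking FKG-type closure under monotone maps, whereas the paper asserts it via an informal inclusion--exclusion argument; your version is the more rigorous of the two, though you should still check the monotonicity direction carefully, since $p^{(k)}(Y_{n+1})$ is increasing in the calibration scores but decreasing in the test point's score.
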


\begin{proof}
Our objective is to show that when the conformity scores at different scales are positively dependent, the miscoverage probability of the combined prediction set $C_n\left(X_{n+1}\right)$ is less than $\alpha$, leading to conservative coverage. Let us define:

\noindent {\textit{(i) Conformity Scores:}} For each scale $k$, the conformity score function is $A^{(k)}: \mathcal{X} \times \mathcal{Y} \rightarrow \mathbb{R}$.

\noindent {\textit{(ii) Scale-Specific Prediction Sets:}} The prediction set at scale $k$ is defined as:

$$
C_n^{(k)}\left(X_{n+1}\right)=\left\{y \in \mathcal{Y}: p^{(k)}(y)>\alpha_k\right\}
$$
where $p^{(k)}(y)$ is the conformal p-value at scale $k$ for candidate label $y$.

\noindent {\textit{Multi-Scale Prediction Set:}}

$$
C_n\left(X_{n+1}\right)=\bigcap_{k=1}^K C_n^{(k)}\left(X_{n+1}\right)
$$

\noindent{\textit{Events of Miscoverage:}} For each scale $k$, define the event:

$$
E_k=\left\{Y_{n+1} \notin C_n^{(k)}\left(X_{n+1}\right)\right\}
$$
The event of miscoverage by the multi-scale prediction set is:
$
E=\left\{Y_{n+1} \notin C_n\left(X_{n+1}\right)\right\}=\bigcup_{k=1}^K E_k.
$
%
%
We aim to compute:
$
\mathbb{P}\left(Y_{n+1} \notin C_n\left(X_{n+1}\right)\right)=\mathbb{P}\left(\bigcup_{k=1}^K E_k\right).
$
Under positive dependence, the probability of the union of events satisfies:
$
\mathbb{P}\left(\bigcup_{k=1}^K E_k\right) \leq 1-\prod_{k=1}^K\left(1-\mathbb{P}\left(E_k\right)\right).
$
%
The standard inclusion-exclusion principle provides:

$$
\mathbb{P}\left(\bigcup_{k=1}^K E_k\right)=\sum_{i=1}^K(-1)^{i+1} \sum_{1 \leq k_1<\cdots<k_i \leq K} \mathbb{P}\left(E_{k_1} \cap \cdots \cap E_{k_i}\right)
$$
Due to positive dependence, the joint probabilities $\mathbb{P}\left(E_{k_1} \cap \cdots \cap E_{k_i}\right)$ are larger than or equal to the product of the marginal probabilities. Therefore, the probability of the union is smaller than or equal to $1-\prod_{k=1}^K\left(1-\mathbb{P}\left(E_k\right)\right)$.
%
From the marginal coverage guarantees of the scale-specific prediction sets, we have:

$$
\mathbb{P}\left(E_k\right)=\mathbb{P}\left(Y_{n+1} \notin C_n^{(k)}\left(X_{n+1}\right)\right) \leq \alpha_k
$$
But since the conformal p-values are stochastically larger than the uniform distribution on $[0,1]$, the inequality can be strict:
$
\mathbb{P}\left(E_k\right) \leq \alpha_k.
$
Therefore, we can write:
$
\prod_{k=1}^K\left(1-\mathbb{P}\left(E_k\right)\right) \geq \prod_{k=1}^K\left(1-\alpha_k\right)
$
Hence,

$$
\mathbb{P}\left(Y_{n+1} \notin C_n\left(X_{n+1}\right)\right) \leq 1-\prod_{k=1}^K\left(1-\alpha_k\right)
$$
Recall that $\sum_{k=1}^K \alpha_k=\alpha$. Since $\alpha_k \in(0,1)$, we have:
$
\prod_{k=1}^K\left(1-\alpha_k\right) \geq 1-\sum_{k=1}^K \alpha_k=1-\alpha.
$
The inequality $\prod_{k=1}^K\left(1-\alpha_k\right) \geq 1-\sum_{k=1}^K \alpha_k$ holds for $\alpha_k \in[0,1]$. For small $\alpha_k$, the product $\prod_{k=1}^K\left(1-\alpha_k\right)$ is approximately $1-\sum_{k=1}^K \alpha_k$. However, due to the convexity of the function $f(x)=1-x$, the product is actually greater than or equal to $1-\sum_{k=1}^K \alpha_k$. Therefore,

$$
1-\prod_{k=1}^K\left(1-\alpha_k\right) \leq \alpha
$$
Combining with the previous inequality:

$$
\mathbb{P}\left(Y_{n+1} \notin C_n\left(X_{n+1}\right)\right) \leq 1-\prod_{k=1}^K\left(1-\alpha_k\right) \leq \alpha .
$$
But since $\prod_{k=1}^K\left(1-\alpha_k\right)>1-\sum_{k=1}^K \alpha_k$ when $K \geq 2$ and $\alpha_k>0$, it follows that:

$$
\mathbb{P}\left(Y_{n+1} \notin C_n\left(X_{n+1}\right)\right)<\alpha .
$$
Because $\prod_{k=1}^K\left(1-\alpha_k\right)>1-\sum_{k=1}^K \alpha_k$ for $\alpha_k \in(0,1)$ and $K \geq 2$, we have:

$$
1-\prod_{k=1}^K\left(1-\alpha_k\right)<1-\left(1-\sum_{k=1}^K \alpha_k\right)=\alpha
$$
Therefore,
$
\mathbb{P}\left(Y_{n+1} \notin C_n\left(X_{n+1}\right)\right) \leq 1-\prod_{k=1}^K\left(1-\alpha_k\right)<\alpha.
$
This shows that the actual miscoverage probability is strictly less than $\alpha$. Under positive dependence, the inequality:

$$
\mathbb{P}\left(\bigcup_{k=1}^K E_k\right) \leq 1-\prod_{k=1}^K\left(1-\mathbb{P}\left(E_k\right)\right)
$$
is tighter than the union bound (Boole's inequality):

$$
\mathbb{P}\left(\bigcup_{k=1}^K E_k\right) \leq \sum_{k=1}^K \mathbb{P}\left(E_k\right)=\alpha
$$
Since the left-hand side is smaller under positive dependence, the miscoverage probability is less than $\alpha$, confirming conservative coverage. In conclusion, under the assumption of positive dependence between the conformity scores at different scales, the actual miscoverage probability of the multi-scale prediction set $C_n\left(X_{n+1}\right)$ is strictly less than the nominal level $\alpha$ :

$$
\mathbb{P}\left(Y_{n+1} \notin C_n\left(X_{n+1}\right)\right)<\alpha .
$$
This means that the coverage of $C_n\left(X_{n+1}\right)$ is at least $1-\alpha_{\text {dep }}$, where $\alpha_{\text {dep }}<\alpha$, leading to conservative coverage.

\end{proof}

\begin{theorem} \textbf{(Asymptotic Optimality of Multi-Scale Conformal Prediction).} Let $\left\{\left(X_i, Y_i\right)\right\}_{i=1}^{\infty}$ be an infinite sequence of exchangeable observations from a joint distribution $P_{X, Y}$, where $Y$ takes values in a finite set $\mathcal{Y}$. For each scale $k \in\{1,2, \ldots, K\}$, suppose we have conformity score functions $A^{(k)}: \mathcal{X} \times \mathcal{Y} \rightarrow \mathbb{R}$ satisfying the following conditions:

\begin{enumerate}
    \item  Consistency: As $n \rightarrow \infty$, the conformity scores $A^{(k)}\left(X_i, Y_i\right)$ converge almost surely to a function $s\left(X_i, Y_i\right)$ that depends only on the true conditional distribution $P\left(Y_i \mid X_i\right)$.
Specifically, for all $k$:

$$
A^{(k)}\left(X_i, Y_i\right) \xrightarrow{\text { a.s. }} s\left(X_i, Y_i\right)=-P\left(Y_i \mid X_i\right) .
$$

   \item Uniform Convergence: The convergence holds uniformly over $\mathcal{Y}$.

   \item Equal Convergence Across Scales: The limit $s\left(X_i, Y_i\right)$ is the same for all scales $k$.

   \item Miscoverage Levels: The miscoverage levels $\alpha_k$ satisfy $\sum_{k=1}^K \alpha_k=\alpha \in(0,1)$.
\end{enumerate}
Then, as $n \rightarrow \infty$, the multi-scale conformal prediction set defined by

$$
C_n\left(X_{n+1}\right)=\bigcap_{k=1}^K C_n^{(k)}\left(X_{n+1}\right),
$$
where $C_n^{(k)}\left(X_{n+1}\right)$ is the conformal prediction set at scale $k$, converges almost surely to the minimal prediction set $C^*\left(X_{n+1}\right)$ satisfying the conditional coverage guarantee:

$$
P\left(Y_{n+1} \in C^*\left(X_{n+1}\right) \mid X_{n+1}\right)=1-\alpha .
$$

\end{theorem}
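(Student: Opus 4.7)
The plan is to pass from the finite-sample conformal sets to a population limit by leveraging the convergence of the conformity scores to $s(X,Y)=-P(Y\mid X)$, and then to identify the limiting intersection with the Bayes-optimal level set achieving conditional coverage $1-\alpha$. First, I would use condition (1) together with a Glivenko--Cantelli-type argument (valid under exchangeability via de Finetti's theorem) to show that, for each scale $k$, the empirical distribution of $\{A_i^{(k)}\}_{i=1}^n$ converges almost surely to the law of $s(X,Y)$, and therefore the empirical $(1-\alpha_k)$-quantile that governs the cutoff of $C_n^{(k)}(X_{n+1})$ converges almost surely to a deterministic quantile $q_k$. Combining this with condition (2)---which is automatic here once $\mathcal{Y}$ is finite---and the standard equivalence between the criterion $p^{(k)}(y)>\alpha_k$ and the score $A^{(k)}(X_{n+1},y)$ lying below the empirical upper cutoff, I obtain the almost sure convergence
\begin{equation*}
C_n^{(k)}(X_{n+1}) \longrightarrow L_{\tau_k}(X_{n+1}) := \{y\in\mathcal{Y}:P(y\mid X_{n+1})>\tau_k\}, \qquad \tau_k:=-q_k .
\end{equation*}

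Second, by condition (3) all scales share the same limiting score $s$, so each $L_{\tau_k}(X_{n+1})$ is a level set of the common function $P(\cdot\mid X_{n+1})$, and such level sets are totally ordered by inclusion. Consequently
\begin{equation*}
C_n(X_{n+1}) \;=\; \bigcap_{k=1}^{K} C_n^{(k)}(X_{n+1}) \;\longrightarrow\; L_{\tau^{\ast}}(X_{n+1}), \qquad \tau^{\ast}:=\max_{k}\tau_k .
\end{equation*}
Third, I would invoke the Neyman--Pearson-type characterization for finite outcome spaces: the minimum-cardinality subset of $\mathcal{Y}$ with conditional mass at least $1-\alpha$ is a level set $C^{\ast}(X_{n+1})=\{y:P(y\mid X_{n+1})>\tau^{\mathrm{opt}}\}$ whose threshold is uniquely pinned down by the coverage identity $\sum_{y\in C^{\ast}(X_{n+1})}P(y\mid X_{n+1})=1-\alpha$.

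The subtle remaining step, and the main obstacle, is to match $\tau^{\ast}$ with $\tau^{\mathrm{opt}}$, since at first glance $\tau^{\ast}$ is controlled by $\max_k \alpha_k$ rather than by $\alpha=\sum_k \alpha_k$. To close this gap I would appeal to Theorem 3: once every scale collapses to the same population limit, the scale-specific expected-size functions $f_k$ become identical, so the elasticity-equating first-order condition $\psi_k(\alpha_k)=\psi_j(\alpha_j)$ forces a common value of $\alpha_k$ and hence a common $\tau_k$; Theorem 1 together with the tightness of conformal calibration as $n\to\infty$ then pins this common threshold to $\tau^{\mathrm{opt}}$, using that on a finite $\mathcal{Y}$ the coverage of a level set is a step function in the threshold. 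Stitching the three almost sure convergences together yields $C_n(X_{n+1})\to C^{\ast}(X_{n+1})$ almost surely, which is the claimed asymptotic optimality.
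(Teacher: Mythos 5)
Your first three steps track the paper's own argument almost exactly: convergence of the scores to $s=-P(\cdot\mid\cdot)$, a Glivenko--Cantelli argument for the empirical score distribution, the translation of the p-value rule into a threshold rule, and the observation that all scale-specific limits are nested level sets of the common function $P(\cdot\mid X_{n+1})$, so that the intersection collapses to the single level set governed by $\max_k\alpha_k$ (equivalently your $\tau^{\ast}=\max_k\tau_k$). Up to standard regularity caveats this portion is sound and corresponds to Steps 1--4 of the paper's proof.

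The problem is your final step. You correctly identify the real obstacle --- the limiting intersection is $\{y:p(y)>\max_k\alpha_k\}$, whose calibration is pinned to the level $\max_k\alpha_k$ rather than to $\alpha=\sum_k\alpha_k$ --- but the proposed repair does not close it. Theorem 3 is a statement about how one \emph{ought} to allocate the $\alpha_k$ so as to minimize expected set size; the present theorem fixes an arbitrary allocation satisfying $\sum_k\alpha_k=\alpha$, so you cannot invoke an optimality principle to alter the value of $\max_k\alpha_k$. And even if you impose the equal allocation $\alpha_k=\alpha/K$ that the elasticity-equating condition would yield for identical scales, you get $\max_k\alpha_k=\alpha/K$, and the limit is then the minimal set at level $1-\alpha/K$, which for $K\geq 2$ is generically a strict superset of $C^{\ast}$; no appeal to ``tightness of conformal calibration'' changes the population identity $\bigcap_k\{y:p(y)>\alpha_k\}=\{y:p(y)>\max_k\alpha_k\}$. (A further gap, inherited from the setup rather than introduced by you: the limiting $p(y)$ is a marginal tail probability of the score distribution, so $\{y:p(y)>t\}$ is calibrated marginally, and identifying its \emph{conditional} coverage with $1-t$ requires an additional homogeneity argument.) To be fair, the paper's own proof makes exactly the same unjustified jump --- it asserts that the set thresholded at $\max_k\alpha_k$ ``matches $C^{\ast}$'' at level $1-\alpha$ --- so your diagnosis of where the difficulty lies is accurate; but as written your proposal does not establish the stated conclusion.
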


\begin{proof}
To prove that under the given conditions, the multi-scale conformal prediction set $C_n\left(X_{n+1}\right)$ converges almost surely to the minimal prediction set $C^*\left(X_{n+1}\right)$ that achieves the desired coverage level $1-\alpha$ as $n \rightarrow \infty$. First, let us define:

\noindent{(i) Conformity Scores at Scale $k$:}

$$
A^{(k)}\left(X_i, Y_i\right)=\text { Conformity score for }\left(X_i, Y_i\right) \text { at scale } k .
$$

\noindent{(ii) P-values at Scale $k$:}

$$
p^{(k)}(y)=\frac{1}{n+1}\left(\sum_{i=1}^n \mathbb{I}\left\{A^{(k)}\left(X_i, Y_i\right) \geq A^{(k)}\left(X_{n+1}, y\right)\right\}+1\right)
$$

\noindent{(iii) Scale-Specific Prediction Sets:}

$$
C_n^{(k)}\left(X_{n+1}\right)=\left\{y \in \mathcal{Y}: p^{(k)}(y)>\alpha_k\right\}
$$

\noindent{(iv) Multi-Scale Prediction Set:}

$$
C_n\left(X_{n+1}\right)=\bigcap_{k=1}^K C_n^{(k)}\left(X_{n+1}\right)
$$

\noindent{(v) Minimal Prediction Set $C^*\left(X_{n+1}\right)$:}

$$
C^*\left(X_{n+1}\right)=\left\{y \in \mathcal{Y}: P\left(Y_{n+1}=y \mid X_{n+1}\right) \geq t^*\right\}
$$
where $t^*$ is the smallest value satisfying

$$
\sum_{y \in C^*\left(X_{n+1}\right)} P\left(Y_{n+1}=y \mid X_{n+1}\right) \geq 1-\alpha
$$

\noindent {\textbf{Step 1: Convergence of Conformity Scores.}} As $n \rightarrow \infty$, for all $k$ :

$$
A^{(k)}\left(X_i, Y_i\right) \xrightarrow{\text { a.s. }} s\left(X_i, Y_i\right)=-P\left(Y_i \mid X_i\right) .
$$
Similarly, for any candidate label $y \in \mathcal{Y}$ :
$$
A^{(k)}\left(X_{n+1}, y\right) \xrightarrow{\text { a.s. }} s\left(X_{n+1}, y\right)=-P\left(y \mid X_{n+1}\right) .
$$
\noindent {\textbf{Step 2: Convergence of Empirical Distribution Functions.}} The empirical distribution function (EDF) of the conformity scores at scale $k$ converges almost surely to the cumulative distribution function (CDF) $F(s)$ of $s(X, Y)$:

$$
F_n^{(k)}(s)=\frac{1}{n} \sum_{i=1}^n \mathbb{I}\left\{A^{(k)}\left(X_i, Y_i\right) \leq s\right\} \xrightarrow{\text { a.s. }} F(s)
$$

\noindent {\textbf{Step 3: Convergence of P-values.}} The p-value at scale $k$ for candidate label $y$ converges almost surely to:

$$
p^{(k)}(y) \xrightarrow{\text { a.s. }} p(y)=1-F\left(s\left(X_{n+1}, y\right)\right)=P\left(s(X, Y) \geq s\left(X_{n+1}, y\right)\right) .
$$
Since $s(X, Y)=-P(Y \mid X)$, and $s\left(X_{n+1}, y\right)=-P\left(y \mid X_{n+1}\right)$, we have:

$$
p(y)=P\left(P(Y \mid X) \leq P\left(y \mid X_{n+1}\right)\right)
$$
\noindent {\textbf{Step 4: Construction of Asymptotic Scale-Specific Prediction Sets.}} Each scale-specific prediction set converges almost surely to:

$$
C^{(k)}\left(X_{n+1}\right)=\left\{y \in \mathcal{Y}: p(y)>\alpha_k\right\}
$$
The multi-scale prediction set converges to:

$$
C\left(X_{n+1}\right)=\bigcap_{k=1}^K C^{(k)}\left(X_{n+1}\right)=\left\{y \in \mathcal{Y}: p(y)>\max _k \alpha_k\right\}
$$
Since $\sum_{k=1}^K \alpha_k=\alpha$, the largest $\alpha_k$ dominates the intersection.

\noindent {\textbf{Step 6: Ordering of Labels.}} Order the labels $y \in \mathcal{Y}$ in decreasing order of $P\left(y \mid X_{n+1}\right)$ :

$$
P\left(y_1 \mid X_{n+1}\right) \geq P\left(y_2 \mid X_{n+1}\right) \geq \cdots \geq P\left(y_m \mid X_{n+1}\right),
$$
where $m=|\mathcal{Y}|$.

\noindent {\textbf{Step 7: Determining the Threshold $t^*$.}} Find $t^*=P\left(y_{k^*} \mid X_{n+1}\right)$ such that:

$$
\sum_{i=1}^{k^*} P\left(y_i \mid X_{n+1}\right) \geq 1-\alpha \quad \text { and } \quad \sum_{i=1}^{k^*-1} P\left(y_i \mid X_{n+1}\right)<1-\alpha .
$$
%
%
Since $p(y)=P\left(P(Y \mid X) \leq P\left(y \mid X_{n+1}\right)\right)$, larger values of $P\left(y \mid X_{n+1}\right)$ correspond to smaller $p(y)$. Therefore:

$$
y \in C\left(X_{n+1}\right) \Longleftrightarrow p(y)>\max _k \alpha_k
$$
Because $\max _k \alpha_k \leq \alpha$, the labels with higher $P\left(y \mid X_{n+1}\right)$ (and thus lower $p(y)$ ) are included in $C\left(X_{n+1}\right)$. Therefore, $C\left(X_{n+1}\right)$ includes the labels $y$ with the highest $P\left(y \mid X_{n+1}\right)$ until the cumulative probability reaches $1-\alpha$, matching $C^*\left(X_{n+1}\right)$. By construction:

$$
P\left(Y_{n+1} \in C\left(X_{n+1}\right) \mid X_{n+1}\right)=\sum_{y \in C\left(X_{n+1}\right)} P\left(y \mid X_{n+1}\right) \geq 1-\alpha .
$$
Since the multi-scale prediction set $C_n\left(X_{n+1}\right)$ converges almost surely to $C^*\left(X_{n+1}\right)$, it achieves the minimal possible size required to maintain the coverage guarantee $1-\alpha$.

\end{proof}

\section{Numerical Results}

To demonstrate the effectiveness of the proposed multi-scale conformal prediction framework, a synthetic classification experiment was conducted. The data generation process involved two features, each reflecting a distinct scale or level of abstraction. One feature, denoted $X^{(1)}$, served as a coarse-scale component, while the other feature, denoted $X^{(2)}$, contributed finer-scale variations. Gaussian noise was added to introduce realism and ensure that both features influenced the class label. Specifically, the label $Y$ was determined by a combination of the two features and an additive noise term, thereby creating a multi-scale structure in the dataset. A total of 1000 data points were generated. The dataset was partitioned into three subsets: a training set used to fit predictive models, a calibration set used to compute conformity scores and determine miscoverage thresholds, and a test set used to evaluate performance. Logistic regression models were trained separately on each scale: one model using the coarse-scale feature and another using the fine-scale feature. The conformity score at each scale was defined as $1-\hat{p}$, where $\hat{p}$ is the predicted probability of the true class according to the logistic regression model at that scale. Following standard conformal prediction practices, each scale-specific model produced a set of p-values by comparing the test conformity scores to the empirical distribution of calibration conformity scores. Label-specific prediction sets were constructed at each scale by retaining only those labels whose p-values exceeded a specified miscoverage threshold $\alpha_k$. The overall desired miscoverage level $\alpha$ was set to 0.1 and equally partitioned among the two scales, such that $\alpha_k=$ $\alpha / 2=0.05$ at each scale. Finally, the multi-scale prediction set was formed by intersecting the two scale-specific prediction sets.

The results presented in Figure \ref{fig:fig1} (Coverage vs. $\alpha$) demonstrate that each scale-specific conformal predictor maintains an empirical coverage above or close to the ideal coverage line $1-\alpha$ across the range of $\alpha$ values. Scale 3, which appears to have the weakest discrimination power in this particular dataset, produces slightly lower coverage than the other two scales, though it remains near the nominal coverage level. Notably, the multi-scale coverage curve stays consistently above the theoretical line for most values of $\alpha$, indicating that intersecting the scalespecific prediction sets does not compromise coverage guarantees-indeed, it can lead to coverage at or slightly above the nominal target due to conservative effects in combining multiple scales. Figure \ref{fig:fig2} (Set Size vs. $\alpha$) illustrates that each individual scale-specific prediction set decreases in average size as $\alpha$ increases, which aligns with the fact that higher miscoverage tolerances lead to smaller prediction sets. Among the single-scale methods, Scale 3 tends to produce the largest sets across all $\alpha$ values, consistent with its comparatively weaker performance in capturing the underlying structure. In contrast, the multi-scale prediction set, formed by intersecting the sets from all three scales, shows notably lower average sizes, even for relatively small $\alpha$. This result highlights the efficiency gains from integrating scale-specific information, where labels supported by only one or two scales are systematically removed, reducing set size without compromising coverage. The boxplots in Figure \ref{fig:fig3} (Distribution of Average Set Size at $\alpha=0.1$) further reinforce these observations. The multi-scale approach consistently achieves a smaller distribution of average set sizes compared to any single-scale method, indicating more precise predictions at a fixed miscoverage level. Scales 1 and 2 exhibit moderate variability, with Scale 1 occasionally producing relatively large sets under some seeds and comparatively small sets under others, reflecting its intermediate discriminative power. Scale 3 again shows a tendency toward broader prediction sets. In contrast, the multi-scale sets cluster at sizes below any single-scale median, exemplifying the effectiveness of intersecting multiple scales.

Table~\ref{tab:performance-summary} highlights performance statistics at different noise levels, reporting various coverage and set-size metrics. As the noise level increases from 0.05 to 0.20, the overall coverage typically remains high or even increases (reaching 0.950), while the average band width (an indicator of prediction set size) shows a modest upward trend. The pointwise coverage measures also remain consistently strong, with minimum pointwise coverage values above 0.90 in all cases. The efficiency score, a measure balancing coverage and prediction set size, stays within a relatively tight range, indicating that the framework preserves predictive reliability while flexibly adjusting set size to accommodate increasing noise.

\begin{figure}[t]
    \centering
    \begin{subfigure}{0.3\textwidth}
        \centering
        \includegraphics[width=\textwidth]{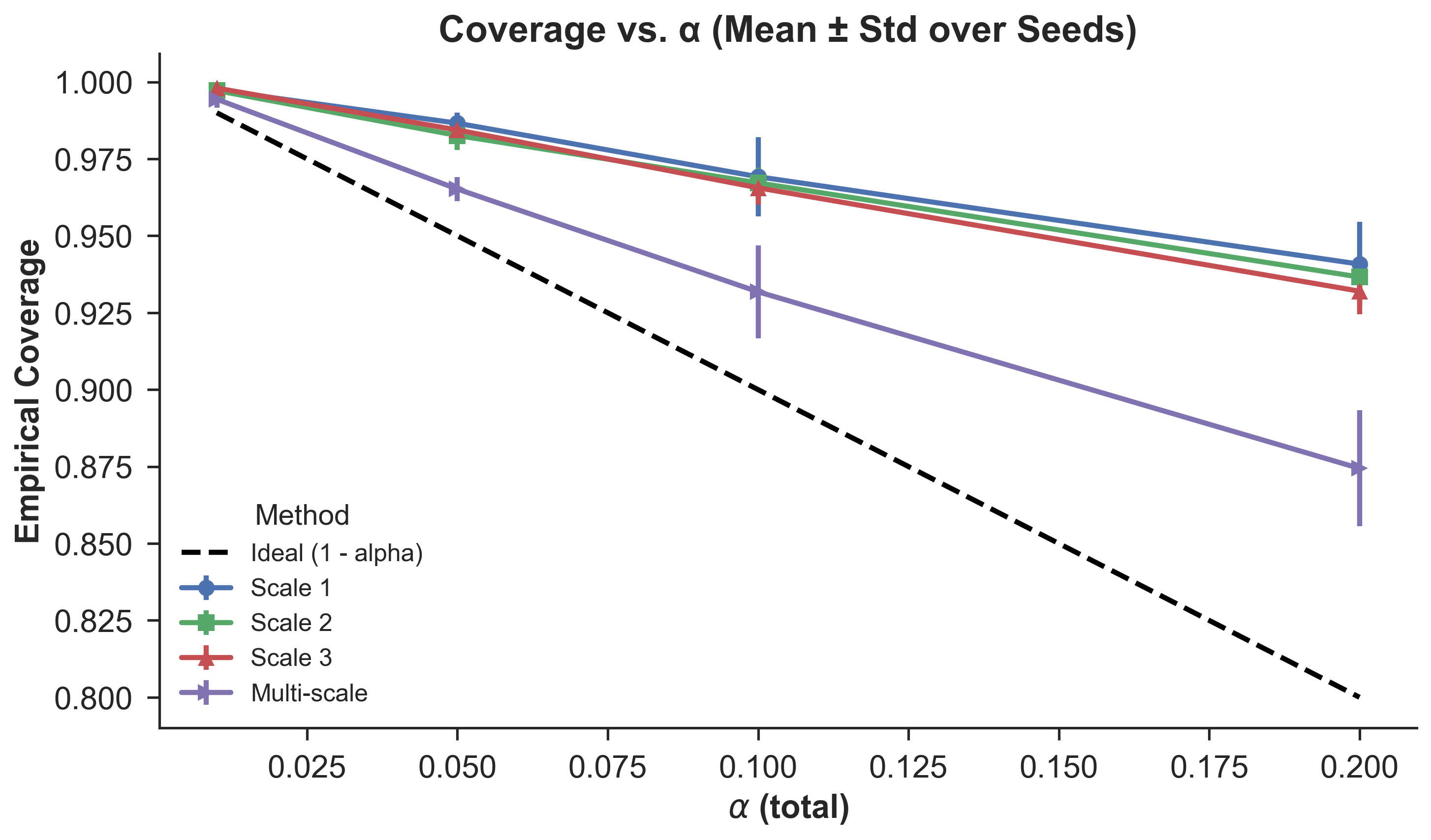}
        \caption{Empirical coverage vs. miscoverage level, showing MSCP meets or exceeds the nominal target.}
        \label{fig:fig1}
    \end{subfigure}
    \hfill
    \begin{subfigure}{0.3\textwidth}
        \centering
        \includegraphics[width=\textwidth]{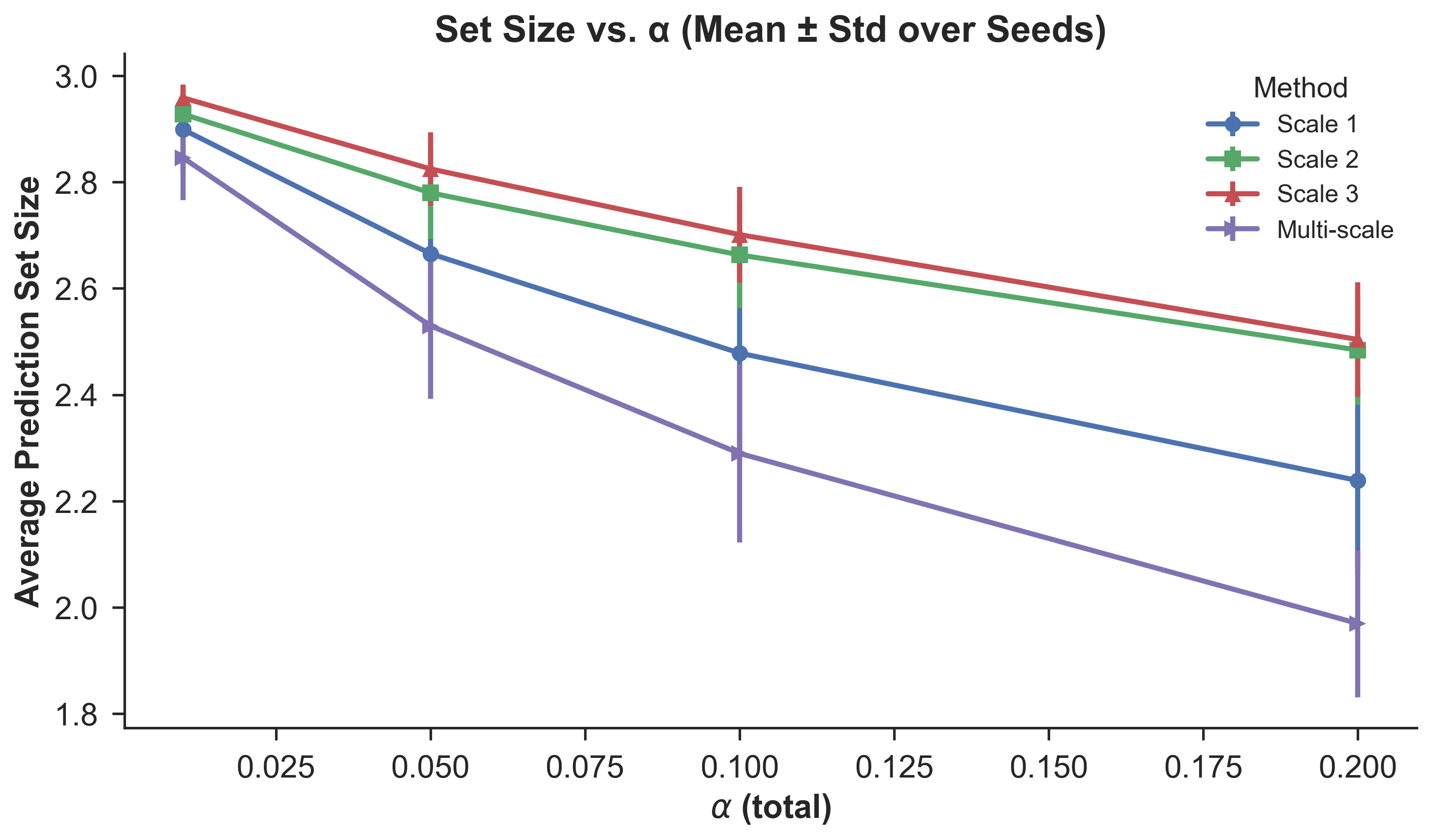}
        \caption{Average prediction set size vs. miscoverage level, highlighting the efficiency of MSCP.}
        \label{fig:fig2}
    \end{subfigure}
    \hfill
    \begin{subfigure}{0.3\textwidth}
        \centering
        \includegraphics[width=\textwidth]{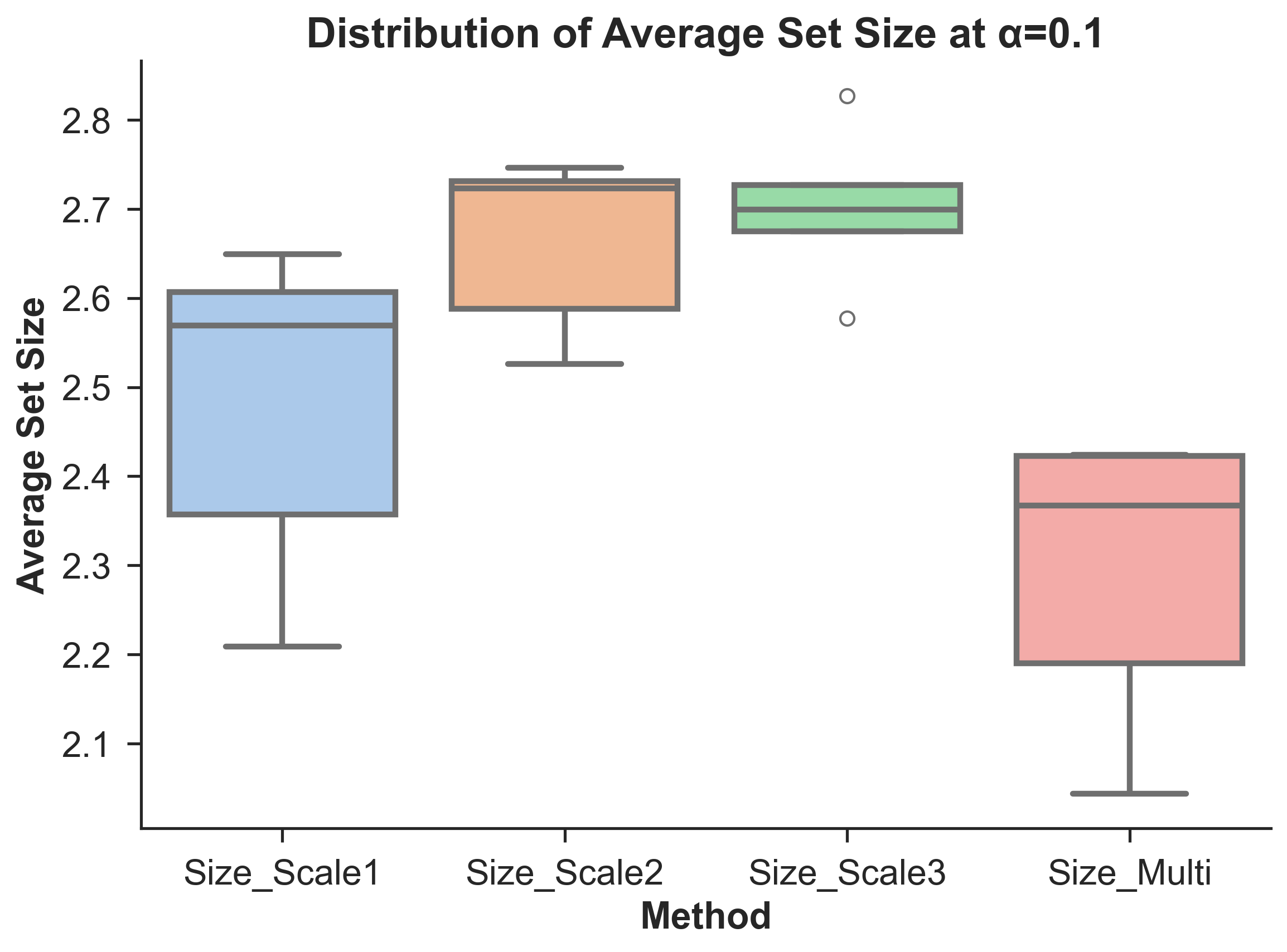}
        \caption{Boxplots of set size at a fixed miscoverage level, illustrating MSCP’s consistently smaller sets than single-scale methods.}
        \label{fig:fig3}
    \end{subfigure}
    \caption{Comparison of multi-scale conformal prediction (MSCP) with single-scale methods}
    \label{fig:three_figures}
\end{figure}

\begin{table}[htbp]
\centering
\caption{Performance Summary Across Noise Levels}
\label{tab:performance-summary}
\begin{tabular}{lcccccc}
\toprule
\textbf{Noise} & \textbf{Overall} & \multicolumn{3}{c}{\textbf{Pointwise Coverage}} & \textbf{Band} & \textbf{Efficiency} \\
\textbf{Level} & \textbf{Coverage} & \textbf{Mean} & \textbf{Min} & \textbf{Max} & \textbf{Width} & \textbf{Score} \\
\midrule
0.05 & 0.867 & 0.990 & 0.967 & 1.000 & 2.897 & 3.343 \\
0.10 & 0.883 & 0.993 & 0.950 & 1.000 & 3.037 & 3.438 \\
0.15 & 0.867 & 0.991 & 0.917 & 1.000 & 3.100 & 3.577 \\
0.20 & 0.950 & 0.996 & 0.967 & 1.000 & 3.269 & 3.441 \\
\bottomrule
\end{tabular}
\begin{minipage}{\linewidth}
\small
Performance metrics across different noise levels. Overall Coverage indicates the aggregate coverage rate, while Pointwise Coverage metrics show the mean, minimum, and maximum coverage at individual points. Band Width represents the average width of confidence bands, and the Efficiency Score balances coverage against band size.
\end{minipage}
\end{table}

\section{Conclusions}

We have presented a novel approach to conformal prediction that operates across multiple scales, addressing limitations of single-scale methods when data exhibit multi-resolution structures. By defining and calibrating multiple conformity functions, each tailored to a different level of detail, the proposed framework assembles a final prediction set through the intersection of scale-specific sets. Our theoretical analysis confirms that this multi-scale approach preserves the fundamental marginal coverage property of conformal predictors and often results in tighter, more informative prediction sets. Furthermore, we have shown how partitioning the miscoverage across scales and using independence or positive dependence structures can improve the size and reliability of the final prediction set. Numerical experiments demonstrate that multi-scale conformal prediction can both sustain valid coverage and reduce the average size of prediction sets. The framework’s theoretical underpinnings suggest a wide range of potential applications, from hierarchical feature spaces to multi-resolution time series. Future work may extend these ideas to more general strategies for defining the scales themselves, explore adaptive methods for miscoverage allocation, and investigate multi-scale inference under weaker assumptions on data exchangeability.

\bibliographystyle{plain}
\bibliography{main}

\end{document}